\theoremstyle{plain}
\newtheorem{theorem}{Theorem}[section]
\newtheorem{corollary}[theorem]{Corollary}
\newtheorem{lemma}[theorem]{Lemma}
\newtheorem{proposition}[theorem]{Proposition}
\theoremstyle{definition}
\newtheorem{problem}[theorem]{Problem}
\newtheorem{algorithm}[theorem]{Algorithm}
\def\img#1{\mathrm{Im}(#1)}
\def\aut#1{\mathrm{Aut}(#1)}
\def\Z{\mathbb Z}
\def\Q{\mathcal Q}
 \author{David Stanovsk\'y, Petr Vojt\v echovsk\'y}
\title{Central and medial quasigroups of small order}
\begin{document}

\maketitle


\emph{\footnotesize  This paper was written on the occasion of the 90th anniversary of Valentin Danilovich Belousov's birthday.
Prof.\,Belousov pioneered enumerative results for quasigroups in
his book ``Fundametals of the theory of quasigroups and loops''
and his work has been a frequent source of inspiration for the
Prague algebraic school. }

\section{Introduction}\label{sec:intro}

Given an abelian group $(G,+)$, automorphisms $\varphi$, $\psi$ of $(G,+)$, and an element $c\in G$, define a new operation $*$ on $G$ by
\begin{displaymath}
    x*y = \varphi(x)+\psi(y)+c.
\end{displaymath}
The resulting quasigroup $(G,*)$ is said to be \emph{affine over} $(G,+)$, and it will be denoted by $\Q(G,+,\varphi,\psi,c)$.
Quasigroups that are affine over an abelian group are called \emph{central quasigroups} or \emph{T-quasigroups}. We will use the terms ``quasigroup affine over an abelian group'' and ``central quasigroup'' interchangeably. Central quasigroups are precisely the abelian quasigroups in the sense of universal algebra \cite{Sz}.

A quasigroup $(Q,\cdot)$ is called \emph{medial} if it satisfies the medial law
\begin{displaymath}
    (x\cdot y)\cdot(u\cdot v)=(x\cdot u)\cdot(y\cdot v).
\end{displaymath}
Medial quasigroups are also known as \emph{entropic} quasigroups.
The fundamental Toyoda-Bruck theorem \cite[Theorem 3.1]{Sta-latin} states that, up to isomorphism, medial quasigroups are precisely central quasigroups $\Q(G,+,\varphi,\psi,c)$ with commuting automorphisms $\varphi$, $\psi$.

\medskip
The classification of central (or medial) quasigroups up to isotopy is trivial in the sense that it coincides with the classification of abelian groups up to isomorphism. Indeed:
\begin{itemize}
\item If $(G,*) = \Q(G,+,\varphi,\psi,c)$ is a central quasigroup then $(G,*)$ is isotopic to $(G,+)$ via the isotopism $(x\mapsto \varphi(x)$, $x\mapsto \psi(x)+c$, $x\mapsto x)$.
\item If two central quasigroups $Q_i = \Q(G_i,+_i,\varphi_i,\psi_i,c_i)$ are isotopic then the underlying groups $(G_i,+_i)$ are isotopic. But isotopic groups are necessarily isomorphic, cf.\;\cite[Proposition 1.4]{Smi-book}.
\end{itemize}
Classifying and enumerating central and medial quasigroups up to isomorphism is nontrivial, however, and that is the topic of the present paper.

\medskip
There are not many results in the literature concerning enumeration and classification of central and medial quasigroups.

Simple idempotent medial quasigroups were classified by Smith in \cite[Theorem 6.1]{Smi}. Sokhatsky and Syvakivskij \cite{SS} classified $n$-ary quasigroups affine over cyclic groups and obtained a formula for the number of those of prime order. Kirnasovsky \cite{Kir} carried out a computer enumeration of central quasigroups up to order $15$, and obtained more classification results in his PhD thesis \cite{Kir1}. Idempotent medial quasigroups of order $p^k$, $k\leq4$, were classified by Hou \cite[Table 1]{Hou}.

At the time of writing this paper, the On-line Encyclopedia of Integer Sequences \cite{OEIS} gives the number of medial quasigroups of order $\le 8$ up to isomorphism as the sequence A226193, and there appears to be no entry for the number of central quasigroups up to isomorphism.

Dr\'apal \cite{Dra} and Sokhatsky \cite{Sok} obtained a general isomorphism theorem for quasigroups isotopic to groups, cf.\;\cite[Theorem 2.10]{Dra} and \cite[Corollary 28]{Sok}, and for central quasigroups in particular, cf.\; \cite[Theorem 3.2]{Dra}, or its restatement, Theorem \ref{Th:Alg}.
Dr\'apal applied the machinery to calculate isomorphism classes of quasigroups of order $4$ (by hand), and Kirnasovsky used Sokhatsky's theory for the calculations mentioned above.
In the present paper, we use a similar approach to obtain stronger enumeration results, taking advantages of the computer system \texttt{GAP} \cite{GAP}.


\medskip
We refer the reader to \cite{Smi-book} for general theory of quasigroups, to \cite{Dra} for a more extensive list of references on central quasigroups, to \cite{Sok} for results on quasigroups isotopic to groups, to \cite{Sta-latin} for results on quasigroups affine over various kinds of loops, and to \cite{SSta,Sz} for a broader context on affine representation of general algebraic structures. The article \cite{HR} gives a gentle introduction into automorphism groups of finite abelian groups and points to original sources on that topic.

\medskip
The paper is organized as follows.

In Section \ref{Sc:Iso}, we formulate an isomorphism theorem for central quasigroups, Theorem \ref{thm:iso}, which is less general than \cite[Theorem 2.10]{Dra} or \cite[Corollary 28]{Sok}, and equivalent to but less technical than \cite[Theorem 3.2]{Dra}. We also present the enumeration algorithm in detail.

In Section \ref{Sc:Cyclic}, we establish our own version of \cite[Theorem 2]{SS} and \cite[Theorem 3.5]{Dra} for cyclic $p$-groups, Theorem \ref{Th:Cp}, providing an explicit formula for the number of isomorphism classes. We were informed that the same result was obtained by Kirnasovsky in his unpublished PhD thesis \cite{Kir1}. Since the automorphism groups of cyclic groups are commutative, Theorem \ref{Th:Cp} also yields the number of medial quasigroups up to isomorphism over finite cyclic groups, and of prime order in particular.

Finally, the results of the enumeration are presented in the Appendix.

\section{Isomorphism theorem and enumeration algorithm}\label{Sc:Iso}

\subsection{Elementary properties of the counting functions $cq$ and $mq$}\label{Ss:Elementary}

For an abelian group $G$, let $cq(G)$ (resp. $mq(G)$) denote the number of all central (resp. medial) quasigroups over $G$ up to isomorphism. For $n\geq1$, let $cq(n)$ (resp. $mq(n)$) denote the number of all central (resp. medial) quasigroups of order $n$ up to isomorphism.

Let us establish two fundamental properties of the counting functions.

First, by the remarks in the introduction,
\begin{equation*}
    cq(n)=\sum_{|G|=n} cq(G)\quad\text{and}\quad mq(n)=\sum_{|G|=n} mq(G),
\end{equation*}
where the summations run over all abelian groups of order $n$ up to isomorphism.

Second, Proposition \ref{Pr:HK} shows that the classification of central and medial quasigroups can be reduced to prime power orders. As far as enumeration is concerned, Proposition \ref{Pr:HK} implies that the functions $cq$, $mq:\mathbb N^+\to\mathbb N^+$ are multiplicative in the number-theoretic sense.

\begin{proposition}\label{Pr:HK}
Let $G = H\times K$ be an abelian group such that $\gcd(|H|,|K|)=1$. Up to isomorphism, any quasigroup affine over $G$ can be expressed in a unique way as a direct product of a quasigroup affine over $H$ and a quasigroup affine over $K$. In particular,
\begin{equation*}
    cq(G) = cq(H)\cdot cq(K)\quad\text{and}\quad mq(G) = mq(H)\cdot mq(K).
\end{equation*}
\end{proposition}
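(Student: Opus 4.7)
The argument splits into existence (every quasigroup affine over $G$ is a product of one affine over $H$ and one affine over $K$), uniqueness (the two factors are determined up to isomorphism), and then multiplicativity of the counting functions as a counting corollary.

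For existence, I exploit that $\gcd(|H|,|K|)=1$ forces $H$ and $K$ to be characteristic in $G$, since each is the union of Sylow subgroups for a distinct set of primes. Hence $\aut{G}$ decomposes as $\aut{H}\times\aut{K}$ acting componentwise on $G=H\times K$. Any $\varphi,\psi\in\aut{G}$ therefore split as $\varphi=(\varphi_H,\varphi_K)$ and $\psi=(\psi_H,\psi_K)$, and any $c\in G$ splits as $c=(c_H,c_K)$. Substitution into the affine formula yields
\begin{equation*}
(x_H,x_K)*(y_H,y_K) = \bigl(\varphi_H(x_H)+\psi_H(y_H)+c_H,\; \varphi_K(x_K)+\psi_K(y_K)+c_K\bigr),
\end{equation*}
exhibiting $\Q(G,+,\varphi,\psi,c)$ as the direct product $\Q(H,+,\varphi_H,\psi_H,c_H)\times\Q(K,+,\varphi_K,\psi_K,c_K)$.

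For uniqueness, suppose $\Q(G,+,\varphi,\psi,c)\cong A\times B$ with $A,B$ central quasigroups of orders $|H|,|K|$. The two projections give a complementary pair of congruences on $(G,*)$ with block sizes $|K|$ and $|H|$. Congruences of any affine quasigroup $\Q(G,+,\varphi,\psi,c)$ correspond bijectively to $\varphi,\psi$-invariant subgroups of $G$ (a standard fact about affine algebras), and by Lagrange together with coprimality the only subgroups of $G$ of orders $|H|$ and $|K|$ are $H$ and $K$ themselves. The two congruences must therefore coincide with the coordinate projections from the existence step, forcing $A\cong\Q(H,+,\varphi_H,\psi_H,c_H)$ and $B\cong\Q(K,+,\varphi_K,\psi_K,c_K)$.

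The identity $cq(G)=cq(H)\cdot cq(K)$ is then immediate: the assignment $(Q_H,Q_K)\mapsto Q_H\times Q_K$ is a bijection between pairs of isomorphism classes (over $H$, over $K$) and isomorphism classes over $G$. For the medial refinement, the Toyoda--Bruck condition $\varphi\psi=\psi\varphi$ is equivalent, via the componentwise splitting, to $\varphi_H\psi_H=\psi_H\varphi_H$ together with $\varphi_K\psi_K=\psi_K\varphi_K$, so the bijection restricts to medial classes and yields $mq(G)=mq(H)\cdot mq(K)$. The main subtlety is the congruence--subgroup correspondence used in the uniqueness step; this is the only ingredient not immediately visible from the definitions, and it should probably be invoked explicitly.
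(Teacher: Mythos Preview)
Your proof is correct and follows the same route as the paper, whose entire argument is the single line ``Any automorphism of $G$ decomposes uniquely as a direct product of an automorphism of $H$ and an automorphism of $K$ \dots\ The rest is easy.'' You have simply made ``the rest'' explicit: the componentwise splitting of the affine formula for existence, and the congruence--subgroup correspondence for uniqueness, neither of which the paper spells out.
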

\begin{proof}
Any automorphism of $G$ decomposes uniquely as a direct product of an automorphism of $H$ and an automorphism of $K$, cf.\;\cite[Lemma 2.1]{HR}. The rest is easy.
\end{proof}

\subsection{The isomorphism problem for central quasigroups}

Let us now consider the isomorphism problem for quasigroups affine over a fixed abelian group $(G,+)$.

Consider any group $A$. (Later we will take $A=\aut{G,+}$.) Then $A$ acts on itself by conjugation, and $A$ also acts on $A\times A$ by a simultaneous conjugation in both coordinates, i.e., $(\alpha,\beta)^\gamma = (\alpha^\gamma,\beta^\gamma)$.

\begin{lemma}\label{Lm:Orbits}
Let $A$ be a group. Let $X$ be a complete set of orbit representatives of the conjugation action of $A$ on itself. For $\xi\in X$, let $Y_\xi$ be a complete set of orbit representatives of the conjugation action of the centralizer $C_A(\xi)$ on $A$. Then
\begin{displaymath}
    \{(\xi,\upsilon):\xi\in X,\,\upsilon\in Y_\xi\}
\end{displaymath}
is a complete set of orbit representatives of the conjugation action of $A$ on $A\times A$.
\end{lemma}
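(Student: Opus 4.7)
The plan is to verify the two usual requirements for a complete set of orbit representatives: every orbit contains at least one such pair, and distinct pairs lie in distinct orbits. The key observation that makes the argument work is that conjugating $(\alpha,\beta)$ by $\gamma$ fixes the first coordinate exactly when $\gamma \in C_A(\alpha)$, so after normalizing the first coordinate we have precisely the centralizer left to normalize the second.

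First I would show surjectivity onto orbits. Given an arbitrary $(\alpha,\beta)\in A\times A$, pick $\gamma_1\in A$ such that $\alpha^{\gamma_1}=\xi$ for some $\xi\in X$; this is possible since $X$ meets every conjugacy class of $A$. Conjugating by $\gamma_1$ sends $(\alpha,\beta)$ to $(\xi,\beta^{\gamma_1})$. Now pick $\gamma_2\in C_A(\xi)$ with $(\beta^{\gamma_1})^{\gamma_2}=\upsilon$ for some $\upsilon\in Y_\xi$, which exists because $Y_\xi$ is a complete set of orbit representatives for the $C_A(\xi)$-action on $A$. Since $\xi^{\gamma_2}=\xi$, conjugating $(\xi,\beta^{\gamma_1})$ by $\gamma_2$ yields $(\xi,\upsilon)$, so $(\alpha,\beta)$ is in the orbit of a pair from the claimed set.

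Next I would prove that the listed pairs lie in pairwise distinct orbits. Suppose $(\xi,\upsilon)$ and $(\xi',\upsilon')$ belong to the same $A$-orbit, with $\xi,\xi'\in X$, $\upsilon\in Y_\xi$, $\upsilon'\in Y_{\xi'}$. Then there exists $\gamma\in A$ such that $\xi^\gamma=\xi'$ and $\upsilon^\gamma=\upsilon'$. The first equation shows $\xi$ and $\xi'$ are conjugate in $A$, and since both lie in the transversal $X$ we conclude $\xi=\xi'$, which forces $\gamma\in C_A(\xi)$. The second equation then says $\upsilon$ and $\upsilon'$ lie in the same $C_A(\xi)$-orbit, and since both belong to the transversal $Y_\xi$ we get $\upsilon=\upsilon'$.

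There is no real obstacle here; the argument is a standard ``normalize the first coordinate, then normalize the second by the stabilizer'' bookkeeping. The only thing to be mildly careful about is making sure that the element used in the second step actually fixes the first coordinate, which is exactly why one must take it from $C_A(\xi)$ rather than from all of $A$.
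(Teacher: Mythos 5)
Your proof is correct and follows essentially the same route as the paper: normalize the first coordinate to its unique representative $\xi\in X$, observe that any conjugation fixing the first coordinate $\xi$ must come from $C_A(\xi)$, and then use $Y_\xi$ to settle the second coordinate. The paper states this more tersely, but the content is identical.
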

\begin{proof}
For every $(\alpha,\beta)\in A\times A$ there is a unique $\xi\in X$ and some $\gamma\in A$ such that $(\alpha,\beta)$ and $(\xi,\gamma)$ are in the same orbit. For a fixed $\xi\in X$ and some $\beta$, $\gamma\in A$, we have $(\xi,\beta)$ in the same orbit as $(\xi,\gamma)$ if and only if there is $\delta\in C_A(\xi)$ such that $\beta^\delta = \gamma$.
\end{proof}

\begin{lemma}\label{Lm:Action}
Let $(G,+)$ be an abelian group, $A=\aut{G,+}$ and $\alpha$, $\beta\in A$. Then $C_A(\alpha)\cap C_A(\beta)$ acts naturally on $G/\img{1-\alpha-\beta}$.
\end{lemma}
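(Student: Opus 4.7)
The plan is to verify that every $\gamma$ in $C_A(\alpha)\cap C_A(\beta)$ preserves the subgroup $\img{1-\alpha-\beta}$ of $G$, so that $\gamma$ descends to a well-defined permutation (in fact, automorphism) of the quotient group $G/\img{1-\alpha-\beta}$, and that the resulting assignment $\gamma\mapsto\bar\gamma$ is a group homomorphism. This homomorphism is the desired natural action.

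The central observation is that, since $(G,+)$ is abelian, the automorphism group $A=\aut{G,+}$ sits inside the endomorphism ring $\mathrm{End}(G,+)$, so the expression $1-\alpha-\beta$ makes sense as an element of this ring. The identity $1$ is central in $\mathrm{End}(G,+)$, so if $\gamma\in A$ commutes with $\alpha$ and with $\beta$ (which is the hypothesis $\gamma\in C_A(\alpha)\cap C_A(\beta)$), then $\gamma$ commutes with the endomorphism $1-\alpha-\beta$ inside $\mathrm{End}(G,+)$. First I would use this to conclude that
\[
\gamma\bigl(\img{1-\alpha-\beta}\bigr)=\img{\gamma\circ(1-\alpha-\beta)}=\img{(1-\alpha-\beta)\circ\gamma}\subseteq\img{1-\alpha-\beta},
\]
and since $\gamma$ is bijective this inclusion is in fact an equality.

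It then follows that $\gamma$ induces a well-defined automorphism $\bar\gamma$ of $G/\img{1-\alpha-\beta}$ by $\bar\gamma(x+\img{1-\alpha-\beta})=\gamma(x)+\img{1-\alpha-\beta}$, and the assignment $\gamma\mapsto\bar\gamma$ is manifestly a group homomorphism from $C_A(\alpha)\cap C_A(\beta)$ to $\aut{G/\img{1-\alpha-\beta}}$, yielding the claimed natural action. There is no real obstacle; the only subtlety worth underlining is the passage from commutation in the automorphism group $A$ to commutation with the (possibly non-invertible) endomorphism $1-\alpha-\beta$, which is what forces us to work inside the ambient ring $\mathrm{End}(G,+)$ rather than just within $A$ itself.
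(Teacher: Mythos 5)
Your proposal is correct and takes essentially the same route as the paper: the paper checks well-definedness of $u+U\mapsto\gamma(u)+U$ (where $U=\img{1-\alpha-\beta}$) by the element-wise computation $\gamma(v+(1-\alpha-\beta)(w))=\gamma(v)+(1-\alpha-\beta)(\gamma(w))$, which is precisely your commutation of $\gamma$ with $1-\alpha-\beta$ in $\mathrm{End}(G,+)$, merely packaged as invariance of the subgroup $U$. One small caution: the step ``$\gamma(U)\subseteq U$ and $\gamma$ bijective, hence $\gamma(U)=U$'' is not a valid inference for infinite $G$ in general (a bijective endomorphism can map a subgroup properly into itself); here the equality instead follows from surjectivity of $\gamma$, since $\img{(1-\alpha-\beta)\circ\gamma}=\img{1-\alpha-\beta}$, or one can simply note that the inclusion suffices because $\gamma^{-1}$ also lies in $C_A(\alpha)\cap C_A(\beta)$.
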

\begin{proof}
Let $U=\img{1-\alpha-\beta}$. It suffices to show that for every $\gamma\in C_A(\alpha)\cap C_A(\beta)$ the mapping $u+U\mapsto \gamma(u)+U$ is well-defined. Now, if $u+U=v+U$ then $u=v+w-\alpha(w)-\beta(w)$ for some $w\in G$ and we have $\gamma(u) = \gamma(v) + \gamma(w) - \gamma\alpha(w) - \gamma\beta(w) = \gamma(v) + \gamma(w) - \alpha\gamma(w) - \beta\gamma(w) = \gamma(v) + (1-\alpha-\beta)(\gamma(w)) \in \gamma(v) + U$.
\end{proof}

We will now state a theorem that solves the isomorphism problem for central and medial quasigroups over $(G,+)$. Instead of showing how it follows from the more general \cite[Theorem 2.10]{Dra}, we show that it is equivalent to \cite[Theorem 3.2]{Dra}, which we restate as Theorem \ref{Th:Alg} here.

\begin{theorem}[Isomorphism problem for central quasigroups]\label{thm:iso}
Let $(G,+)$ be an abelian group, let $\varphi_1$, $\psi_1$, $\varphi_2$, $\psi_2\in\aut{G,+}$, and let $c_1$, $c_2\in G$. Then the following statements are equivalent:
\begin{enumerate}
\item[(i)] the central quasigroups $\Q(G,+,\varphi_1,\psi_1,c_1)$ and $\Q(G,+,\varphi_2,\psi_2,c_2)$ are isomorphic;
\item[(ii)] there is an automorphism $\gamma$ of $(G,+)$ and an element $u\in\mathrm{Im}(1-\varphi_1-\psi_1)$ such that
\begin{displaymath}
    \varphi_2 = \gamma \varphi_1 \gamma^{-1},\quad \psi_2 = \gamma \psi_1 \gamma^{-1},\quad c_2=\gamma(c_1+u).
\end{displaymath}
\end{enumerate}
\end{theorem}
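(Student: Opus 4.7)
My plan is to prove the two directions separately, with the forward direction (i) $\Rightarrow$ (ii) being the main work.

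For the implication (ii) $\Rightarrow$ (i), I would guess that the desired isomorphism has the form $f(x)=\gamma(x)+d$ for a suitable $d\in G$. Writing $u=(1-\varphi_1-\psi_1)(w)$ for some $w\in G$ (which is possible since $u\in\img{1-\varphi_1-\psi_1}$), I would set $d=\gamma(w)$ and check the homomorphism identity $f(x*_1 y)=f(x)*_2 f(y)$ by a direct calculation. The conjugation hypotheses $\varphi_2\gamma=\gamma\varphi_1$ and $\psi_2\gamma=\gamma\psi_1$ let one pull $\gamma$ through, and the relation $c_2=\gamma(c_1)+\gamma(u)=\gamma(c_1)+(1-\varphi_2-\psi_2)(d)$ (which follows from $\gamma(1-\varphi_1-\psi_1)=(1-\varphi_2-\psi_2)\gamma$ and $c_2=\gamma(c_1+u)$) takes care of the constant terms.

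For (i) $\Rightarrow$ (ii), the idea is that an isomorphism $f$ between two quasigroups affine over $(G,+)$ must itself be affine. To see this, I would rewrite the equation $f(\varphi_1(x)+\psi_1(y)+c_1)=\varphi_2 f(x)+\psi_2 f(y)+c_2$ by substituting $x\mapsto\varphi_1^{-1}(a)$, $y\mapsto\psi_1^{-1}(b)$, which turns it into
\begin{displaymath}
 C(a+b)=A(a)+B(b)
\end{displaymath}
where $C(z)=f(z+c_1)-c_2$, $A(a)=\varphi_2 f(\varphi_1^{-1}(a))$, $B(b)=\psi_2 f(\psi_1^{-1}(b))$. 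This is precisely the equation for an autotopy of the abelian group $(G,+)$. A short standard argument (set $b=0$, then $a=0$, subtract) shows that any such triple has the form $A(a)=\gamma(a)+e$, $B(b)=\gamma(b)+g$, $C(z)=\gamma(z)+e+g$ for some endomorphism $\gamma$ and constants $e,g\in G$, and bijectivity of $f$ forces $\gamma\in\aut{G,+}$. Unwinding the substitutions yields $f(w)=\gamma(w)+d$ for $d=-\gamma(c_1)+e+g+c_2$.

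Having established the affine form of $f$, I would substitute $f(w)=\gamma(w)+d$ back into the isomorphism equation and compare terms. Matching the coefficients of $x$ and $y$ gives $\varphi_2=\gamma\varphi_1\gamma^{-1}$ and $\psi_2=\gamma\psi_1\gamma^{-1}$; matching constants gives $c_2=\gamma(c_1)+(1-\varphi_2-\psi_2)(d)$. Using the just-derived conjugation relations, one has $(1-\varphi_2-\psi_2)\gamma=\gamma(1-\varphi_1-\psi_1)$, so setting $u=(1-\varphi_1-\psi_1)(\gamma^{-1}(d))\in\img{1-\varphi_1-\psi_1}$ gives $c_2=\gamma(c_1+u)$, exactly as required.

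The main obstacle is the passage from an abstract isomorphism $f$ to an affine map, i.e.\ the autotopy step; everything else is book-keeping with the conjugation relations and the observation that $\gamma$ intertwines $1-\varphi_1-\psi_1$ with $1-\varphi_2-\psi_2$.
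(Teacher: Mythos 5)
Your proposal is correct, but it takes a genuinely different route from the paper. The paper never proves Theorem \ref{thm:iso} from scratch: it treats the isomorphism criterion as a known result of Dr\'apal (and Sokhatsky), restates Dr\'apal's Theorem 3.2 as Theorem \ref{Th:Alg}, and its ``proof'' consists of showing the two formulations are equivalent --- Lemma \ref{Lm:Orbits} reduces the comparison to triples sharing the same pair $(\varphi,\psi)$ of orbit representatives under simultaneous conjugation, and Lemma \ref{Lm:Action} identifies the condition $c_2=\gamma(c_1+u)$, $u\in\mathrm{Im}(1-\varphi-\psi)$, with equality of orbits of cosets in $G/\mathrm{Im}(1-\varphi-\psi)$ under $C_A(\varphi)\cap C_A(\psi)$. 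You instead give a self-contained direct proof: the easy direction by exhibiting the affine isomorphism $x\mapsto\gamma(x)+\gamma(w)$, and the hard direction by the functional-equation (autotopism) argument showing that any isomorphism between two quasigroups affine over the same abelian group must itself be affine, followed by coefficient matching and the intertwining identity $(1-\varphi_2-\psi_2)\gamma=\gamma(1-\varphi_1-\psi_1)$; all of these steps check out (the standard $b=0$, $a=0$, subtract argument does yield $A(a)=\gamma(a)+e$, $B(b)=\gamma(b)+g$, $C(z)=\gamma(z)+e+g$ with $\gamma$ additive, and bijectivity of $f$ makes $\gamma$ an automorphism). In effect you reprove, in this special case, the external result the paper cites; your approach buys a self-contained argument from first principles, while the paper's approach buys brevity and, more importantly, delivers the orbit-representative formulation of Theorem \ref{Th:Alg} that directly feeds the enumeration algorithm.
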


\begin{theorem}[{{\cite[Theorem 3.2]{Dra}}}]\label{Th:Alg}
Let $(G,+)$ be an abelian group and denote $A=\aut{G,+}$. The isomorphism classes of central quasigroups (resp. medial quasigroups) over $(G,+)$ are in one-to-one correspondence with the elements of the set
\begin{displaymath}
    \{(\varphi,\psi,c):\varphi\in X,\,\psi\in Y_\varphi,\,c\in G_{\varphi,\psi}\},
\end{displaymath}
where
\begin{itemize}
	\item $X$ is a complete set of orbit representatives of the conjugation action of $A$ on itself;
	\item $Y_\varphi$ is a complete set of orbit representatives of the conjugation action of $C_A(\varphi)$ on $A$ (resp. on $C_A(\varphi)$), for every $\varphi\in X$;
	\item $G_{\varphi,\psi}$ is a complete set of orbit representatives of the natural action of $C_A(\varphi)\cap C_A(\psi)$ on $G/\img{1-\varphi-\psi}$.
\end{itemize}
\end{theorem}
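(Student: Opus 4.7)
The plan is to deduce Theorem~\ref{Th:Alg} directly from Theorem~\ref{thm:iso} together with Lemmas~\ref{Lm:Orbits} and~\ref{Lm:Action}. Theorem~\ref{thm:iso} says that the isomorphism class of $\Q(G,+,\varphi,\psi,c)$ is determined by (a) the orbit of $(\varphi,\psi)$ under the simultaneous conjugation action of $A=\aut{G,+}$ on $A\times A$, and (b) once $(\varphi,\psi)$ is fixed, a further equivalence class of $c$. So the proof naturally splits into an analysis of the first two coordinates and then of the third.

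First I would apply Lemma~\ref{Lm:Orbits} to the conjugation action of $A$ on $A\times A$: every pair $(\varphi_0,\psi_0)$ is conjugate to a unique pair $(\varphi,\psi)$ with $\varphi\in X$ and $\psi\in Y_\varphi$. By Theorem~\ref{thm:iso}, replacing $(\varphi_0,\psi_0)$ by $(\varphi,\psi)$ at the cost of translating $c_0$ by $\gamma$ does not affect the isomorphism class, so we may always choose $(\varphi,\psi)$ in canonical form.

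With $(\varphi,\psi)$ fixed, Theorem~\ref{thm:iso} says that $\Q(G,+,\varphi,\psi,c_1)\cong\Q(G,+,\varphi,\psi,c_2)$ iff there exist $\gamma\in A$ with $\gamma\varphi\gamma^{-1}=\varphi$, $\gamma\psi\gamma^{-1}=\psi$ and $u\in\img{1-\varphi-\psi}$ such that $c_2=\gamma(c_1+u)$. The first two conditions say precisely that $\gamma\in C_A(\varphi)\cap C_A(\psi)$, and then $\gamma$ preserves the subgroup $\img{1-\varphi-\psi}$ (this is verified in Lemma~\ref{Lm:Action}), so the condition reduces to saying that $c_1+\img{1-\varphi-\psi}$ and $c_2+\img{1-\varphi-\psi}$ lie in the same orbit of the natural action of $C_A(\varphi)\cap C_A(\psi)$ on $G/\img{1-\varphi-\psi}$. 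Choosing a transversal $G_{\varphi,\psi}$ of these orbits gives exactly the third coordinate in the parametrization.

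For the medial case, by Toyoda--Bruck a central quasigroup is medial iff $\varphi$ and $\psi$ commute, so we restrict $\psi$ to $C_A(\varphi)$; since conjugation by an element of $C_A(\varphi)$ preserves commutation with $\varphi$, the orbits of $C_A(\varphi)$ on $A$ descend to orbits on $C_A(\varphi)$, and the rest of the argument is unchanged. No real obstacle is expected in this argument: once Lemma~\ref{Lm:Action} validates the action on the quotient and Lemma~\ref{Lm:Orbits} provides the iterated transversal structure, the remaining work is purely the bookkeeping of translating the pointwise criterion of Theorem~\ref{thm:iso} into the three-level orbit description.
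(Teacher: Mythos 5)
Your derivation is correct and follows essentially the same route as the paper, which establishes Theorem \ref{Th:Alg} by proving its equivalence with Theorem \ref{thm:iso} through exactly the two reductions you use: Lemma \ref{Lm:Orbits} to normalize the pair $(\varphi,\psi)$ and Lemma \ref{Lm:Action} to turn the condition $c_2=\gamma(c_1+u)$ into orbit equality in $G/\img{1-\varphi-\psi}$. Your explicit handling of the medial case (mediality of $\Q(G,+,\varphi,\psi,c)$ amounts to $\varphi\psi=\psi\varphi$, and conjugation by elements of $C_A(\varphi)$ preserves $C_A(\varphi)$, so the orbit analysis restricts) merely spells out what the paper leaves implicit.
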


Here is a proof of the equivalence of Theorems \ref{thm:iso} and \ref{Th:Alg}: By Lemma \ref{Lm:Orbits}, we can assume that we are investigating the equivalence of two triples $(\varphi,\psi,c_1)$ and $(\varphi,\psi,c_2)$ for some $\varphi\in X$, $\psi\in Y_\varphi$ and $c_1$, $c_2\in G$. Let $U=\img{1-\varphi-\psi}$. The following conditions are then equivalent for any $\gamma\in\aut{G,+}$, using Lemma \ref{Lm:Action}: $c_2 = \gamma(c_1+u)$ for some $u\in U$, $c_2\in \gamma(c_1+U) = \gamma(c_1)+U$, $c_2+U = \gamma(c_1)+U = \gamma(c_1+U)$. This finishes the proof.

\subsection{The algorithm}\label{Sc:Alg}

Theorem \ref{Th:Alg} together with the results of Subsection \ref{Ss:Elementary} gives rise to the following algorithm that enumerates central and medial quasigroups of order $n$. In the algorithm we denote by \texttt{R(H,X)} a complete set of representatives of the (clear from context) action of \texttt{H} on \texttt{X}.

\begin{algorithm}\label{Alg:Main}\ \newline
\noindent Input: positive integer $n$

\noindent Output: $cq(n)$ and $mq(n)$

\begin{verbatim}
cqn := 0; mqn := 0;
for G in the set of abelian groups of order n up to isomorphism do
    cqG := 0; mqG := 0;
    A := automorphism group of G;
    for f in R(A,A) do
        for g in R(C_A(f),A) do
            for c in R( Intersection(C_A(f),C_A(g)), G/Im(1-f-g) ) do
                cqG := cqG + 1;
                if f*g=g*f then mqG := mqG + 1; fi;
            od;
        od;
    od;
    cqn := cqn + cqG; mqn := mqn + mqG;
od;
return cqn, mqn;
\end{verbatim}
\end{algorithm}

The algorithm was implemented in the \texttt{GAP} system \cite{GAP} in a straightforward fashion, taking advantage of some functionality of the \texttt{LOOPS} \cite{LOOPS} package. The code is available from the second author at {\tt www.math.du.edu/\textasciitilde petr}.

In small situations it is possible to directly calculate the orbits of the conjugation action of $A=\aut{G,+}$ on $A\times A$. For larger groups, it is safer (due to memory constraints) to work with one conjugacy class of $A$ at a time, as in Algorithm \ref{Th:Alg}.

Among the cases we managed to calculate, the elementary abelian group $C_2^5$ took the most effort, about $4$ hours on a standard personal computer.
It might not be difficult to calculate some of the missing entries for $mq(G)$. However, $cq(C_2^6)$, for instance, appears out of reach without further theoretical advances or more substantial computational resources.

The outcome of the calculation can be found in the Appendix.

\section{Quasigroups affine over cyclic groups}\label{Sc:Cyclic}

Let $G$ be a cyclic group. Since $\aut{G}$ is commutative, every quasigroup affine over $G$ is medial.

\begin{theorem}[{\cite[p. 70]{Kir1}}]\label{Th:Cp}
Let $p$ be a prime and $k$ a positive integer. Then
\begin{displaymath}
cq(C_{p^k})=mq(C_{p^k})=    p^{2k} + p^{2k-2} - p^{k-1} - \sum_{i=k-1}^{2k-1} p^i.
\end{displaymath}
\end{theorem}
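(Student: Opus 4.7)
My plan is to apply Theorem \ref{Th:Alg} directly to $G=\Z/p^k\Z$, exploiting the fact that $A=\aut{G}=(\Z/p^k\Z)^*$ is abelian. Abelianness collapses every conjugation action in Theorem \ref{Th:Alg} to the trivial one, so $X=Y_\varphi=A$, every centralizer is all of $A$, and the count reduces to
\begin{displaymath}
cq(C_{p^k}) = \sum_{\varphi,\psi\in A}\#\bigl(\text{$A$-orbits on }G/\img{1-\varphi-\psi}\bigr).
\end{displaymath}
Identifying $\varphi$ and $\psi$ with multiplication by units $a$ and $b$, the subgroup $\img{1-\varphi-\psi}$ equals $\langle 1-a-b\rangle = p^{j}\Z/p^k\Z$ where $j=\min(v_p(1-a-b),k)$. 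Then $G/\img{1-\varphi-\psi}\cong \Z/p^{j}\Z$, and because $(\Z/p^k\Z)^*\twoheadrightarrow(\Z/p^j\Z)^*$, the $A$-orbits are precisely the subsets of constant $p$-adic valuation; there are $j+1$ of them (one for each valuation $0,1,\ldots,j$).

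The key intermediate formula is therefore
\begin{displaymath}
cq(C_{p^k})=\sum_{a,b\in (\Z/p^k\Z)^*}\bigl(v_p(1-a-b)+1\bigr)=\sum_{j=0}^{k}M(j),
\end{displaymath}
where $M(j)=\#\{(a,b)\in ((\Z/p^k\Z)^*)^2:\;a+b\equiv 1\pmod{p^j}\}$, using the identity $v+1=\#\{j\ge 0:v\ge j\}$. Clearly $M(0)=\phi(p^k)^2=p^{2k-2}(p-1)^2$. For $j\ge 1$ I would argue: if $a\equiv 1\pmod p$, then $b\equiv 1-a\pmod{p^j}$ forces $p\mid b$, so no choice of $b$ works; if $a\not\equiv 1\pmod p$, then $1-a$ is a unit mod $p$, so every lift of $1-a$ from $\Z/p^j\Z$ to $\Z/p^k\Z$ is a unit, giving $p^{k-j}$ valid $b$. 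Counting the valid $a$ as $\phi(p^k)-p^{k-1}=p^{k-1}(p-2)$ yields $M(j)=p^{2k-j-1}(p-2)$. The factor $(p-2)$ neatly absorbs the degenerate case $p=2$, where no pair of units sums to an odd number and $M(j)=0$ for $j\ge 1$, matching the direct check.

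What remains is pure algebra: summing a finite geometric series,
\begin{displaymath}
cq(C_{p^k})=p^{2k-2}(p-1)^2+(p-2)\sum_{i=k-1}^{2k-2}p^i.
\end{displaymath}
Expanding $(p-1)^2=p^2-2p+1$ in the first term and distributing $(p-2)=p-2$ across the geometric sum produces telescoping cancellations between consecutive powers; collecting survivors gives $p^{2k}-p^{2k-1}+p^{2k-2}-2p^{k-1}-\sum_{i=k}^{2k-2}p^i$, which is exactly $p^{2k}+p^{2k-2}-p^{k-1}-\sum_{i=k-1}^{2k-1}p^i$. I do not anticipate a genuine obstacle; the only mildly delicate step is verifying that the unit-lifting analysis for $M(j)$ works uniformly for $p=2$, and the orbit-counting on $G/\img{1-\varphi-\psi}$ requires recognizing that $(\Z/p^k\Z)^*$ surjects onto $(\Z/p^j\Z)^*$. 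Sanity checks against $p=3,k=1$ (giving $5$) and $p=2,k=2$ (giving $4$) confirm the bookkeeping.
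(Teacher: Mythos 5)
Your proof is correct and follows essentially the same route as the paper: apply Theorem \ref{Th:Alg} with $A=\aut{C_{p^k}}$ abelian, identify $\img{1-\varphi-\psi}=p^jG$ via the (capped) $p$-adic valuation of $1-\varphi-\psi$, count $j+1$ orbits on the quotient, and enumerate pairs through the congruence $\varphi+\psi\equiv 1\pmod{p^j}$ with the same split on whether $\varphi\equiv 1\pmod p$. The only difference is bookkeeping: you sum the cumulative counts $M(j)$ over the congruences modulo $p^j$, whereas the paper partitions the pairs by the exact valuation $i$ and weights each class by $i+1$; the two summations are trivially equivalent and lead to the same final formula.
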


\begin{proof}
Let $G=C_{p^k}$ and $A=\mathrm{Aut}(G)$. We will identify $A$ with the $p^k-p^{k-1}$ elements of $G^* = \{a\in G:\ p\nmid a\}$. We will follow Algorithm \ref{Alg:Main}. Since $A$ is commutative, the conjugation action is trivial and we have to consider every $(\varphi,\psi)\in A\times A$. For a fixed $(\varphi,\psi)\in A\times A$, we must consider a complete set of orbit representatives $G_{\varphi,\psi}$ of the action of $A=C_A(\varphi)\cap C_A(\psi)$ on $G/\img{1-\varphi-\psi}$. Now, $\img{1-\varphi-\psi}$ is equal to $p^iG$ if and only if $p^i\mid 1-\varphi-\psi$ and $p^{i+1}\nmid 1-\varphi-\psi$.

\emph{Case} $i=0$, i.e., \[\varphi+\psi\not\equiv1\pmod p.\]
In this case, we can take $G_{\varphi,\psi}=\{0\}$. How many such pairs $(\varphi,\psi)$ exist? First, let us count those with $\varphi\equiv1\pmod p$. Then $\psi\in G^*$ can be chosen arbitrarily, hence we have $p^{k-1}(p^k-p^{k-1})$ such pairs. Next, let us count those with $\varphi\not\equiv1\pmod p$. Then $\psi\in G^*$ must satisfy $\psi\not\equiv 1-\varphi\pmod p$, hence we have $(p^k-2p^{k-1})(p^k-2p^{k-1})$ such pairs. Since $|G_{\varphi,\psi}|=1$, this case contributes to $cq(G)$ by \[p^{k-1}(p^k-p^{k-1}) + (p^k-2p^{k-1})^2.\]

\emph{Cases} $i=1,\dots,k-1$, i.e.,
\begin{displaymath}
    \varphi+\psi\equiv1\pmod{p^i} \quad\text{and}\quad\varphi+\psi\not\equiv1\pmod{p^{i+1}}.
\end{displaymath}
In this case, we can take $G_{\varphi,\psi}=\{0,p^0,\dots,p^{i-1}\}$. How many such pairs $(\varphi,\psi)$ exist? For $\varphi\equiv1\pmod p$, any solution $\psi$ to the congruence above is divisible by $p$, hence there is no such solution $\psi\in G^*$. For $\varphi\not\equiv1\pmod p$, we have precisely $p^{k-i}-p^{k-i-1}$ solutions to the conditions in $G^*$. Since $|G_{\varphi,\psi}|=i+1$, this case contributes to $cq(G)$ by \[(p^k-2p^{k-1})(p^{k-i}-p^{k-i-1})(i+1).\]

\emph{Case} $i=k$, i.e., \[\varphi+\psi=1.\]
In this case, we can take $G_{\varphi,\psi}=\{0,p^0,\dots,p^{k-1}\}$. How many such pairs $(\varphi,\psi)$ exist? Since $\psi$ is uniquely determined by $\varphi$ and neither of $\varphi,\psi$ shall be divisible by $p$, we have precisely $p^k-2p^{k-1}$ such pairs. Since $|G_{\varphi,\psi}|=k+1$, this case contributes to $cq(G)$ by \[(p^k-2p^{k-1})(k+1).\]

Summarized, the cases $i=1,\dots,k$ contribute to $cq(G)$ the total of
\begin{displaymath}
    (p^k-2p^{k-1})\left(\left(
        \sum_{i=1}^{k-1}(p^{k-i}-p^{k-i-1})\cdot(i+1)
    \right)+(k+1)\right),
\end{displaymath}
which, after rearrangement, gives
\[(p^k-2p^{k-1})(2p^{k-1}+p^{k-2}+p^{k-3}+\cdots+p+1).\]
The total sum is then
\begin{align*}
    cq(G) &= p^{2k-1}-p^{2k-2} + (p^k-2p^{k-1})((p^k-2p^{k-1}) + (2p^{k-1}+p^{k-2}+p^{k-3}+\cdots+p+1)) \\
        &= p^{2k-1} - p^{2k-2} + (p^k-2p^{k-1})( p^k + p^{k-2} + p^{k-3} + \cdots + p+1 )\\
        &= p^{2k}-p^{2k-1} - p^{2k-3} - \cdots - p^k - 2p^{k-1},
\end{align*}
which can be expressed as in the statement of the theorem.
\end{proof}

\begin{corollary}
For any $k\ge 1$ we have $cq(C_{2^k}) = mq(C_{2^k}) = 2^{2k-2}$.
\end{corollary}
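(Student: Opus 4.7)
The plan is to simply substitute $p=2$ into the formula of Theorem \ref{Th:Cp} and collapse the geometric sum. Since the text already observes that quasigroups affine over a cyclic group are automatically medial, the equality $cq(C_{2^k}) = mq(C_{2^k})$ is immediate, and it suffices to evaluate the expression for $cq(C_{2^k})$.

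Setting $p=2$ in the formula gives
\[
cq(C_{2^k}) = 2^{2k} + 2^{2k-2} - 2^{k-1} - \sum_{i=k-1}^{2k-1} 2^i.
\]
The first step is to recognize the geometric sum as
\[
\sum_{i=k-1}^{2k-1} 2^i = 2^{2k} - 2^{k-1},
\]
which is the standard identity $2^{a+1}-2^{b}$ with $a=2k-1$ and $b=k-1$. Substituting this in, the terms $2^{2k}$ cancel and the terms $-2^{k-1}$ cancel, leaving exactly $2^{2k-2}$.

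There is no real obstacle here; the only thing to watch for is that this telescoping is a peculiarity of $p=2$ (because $2^{k-1}+2^{k-1}=2^k$ absorbs the leftover low-order term), which is precisely why the corollary is stated separately.
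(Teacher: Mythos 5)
Your proposal is correct and matches the paper's (implicit) argument: the corollary is intended as a direct substitution $p=2$ into Theorem \ref{Th:Cp}, where $\sum_{i=k-1}^{2k-1}2^i = 2^{2k}-2^{k-1}$ cancels the remaining terms to leave $2^{2k-2}$, and $cq=mq$ holds since $\mathrm{Aut}(C_{2^k})$ is abelian. Nothing further is needed.
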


\begin{corollary}\label{c:prime}
For any prime $p$ we have $cq(p) = mq(p) = p^2-p-1$.
\end{corollary}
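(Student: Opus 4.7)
The plan is to derive this as a direct specialization of Theorem \ref{Th:Cp} to the case $k=1$, combined with the elementary observation that there is only one abelian group of prime order.

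First I would note that since $p$ is prime, the only abelian group of order $p$ (up to isomorphism) is the cyclic group $C_p$. By the summation formula $cq(n) = \sum_{|G|=n} cq(G)$ recorded in Subsection \ref{Ss:Elementary}, this gives $cq(p) = cq(C_p)$, and likewise $mq(p) = mq(C_p)$. Since $\aut{C_p}$ is commutative, every quasigroup affine over $C_p$ is medial, so the two counts agree (as already reflected in Theorem \ref{Th:Cp}).

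Next I would substitute $k=1$ into the closed form
\begin{displaymath}
    cq(C_{p^k}) = p^{2k} + p^{2k-2} - p^{k-1} - \sum_{i=k-1}^{2k-1} p^i
\end{displaymath}
from Theorem \ref{Th:Cp}. With $k=1$ the exponents collapse: $p^{2k} = p^2$, $p^{2k-2} = 1$, $p^{k-1} = 1$, and the sum $\sum_{i=0}^{1} p^i$ equals $1+p$. Plugging in and simplifying gives
\begin{displaymath}
    cq(C_p) = p^2 + 1 - 1 - (1+p) = p^2 - p - 1,
\end{displaymath}
which is the desired value.

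There is essentially no obstacle here; the only thing to verify is the arithmetic and that Theorem \ref{Th:Cp} is indeed applicable at $k=1$ (it is, since the proof there handles arbitrary positive integer $k$, and the case analysis degenerates correctly: the ranges $i = 1,\dots,k-1$ become empty, leaving only the $i=0$ and $i=k$ contributions, which sum to $(p-1)p + (p-2)\cdot 2 = p^2 - p - 1$ as an independent sanity check).
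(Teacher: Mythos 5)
Your derivation is correct and is exactly the route the paper intends: Corollary \ref{c:prime} is stated without proof as the $k=1$ specialization of Theorem \ref{Th:Cp}, using that $C_p$ is the unique abelian group of order $p$ so that $cq(p)=cq(C_p)$ and $mq(p)=mq(C_p)$. One small slip in your parenthetical sanity check: re-running the case analysis of the theorem's proof at $k=1$, the $i=0$ case contributes $(p-1)+(p-2)^2$, not $(p-1)p$, and adding the $i=k$ contribution $2(p-2)$ then indeed gives $p^2-p-1$.
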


Corollary \ref{c:prime} is a special case of \cite[Corollary 2]{SS} for binary quasigroups.

As a counterpart to Theorem \ref{Th:Cp}, we ask:

\begin{problem}
For a prime $p$ and $k>1$, find explicit formulas for $cq(C_p^k)$ and $mq(C_p^k)$.
\end{problem}

\newpage

\section*{Appendix: Central and medial quasigroups of order less than $128$}

The following table contains the results of our enumeration of central and medial quasigroups of order less than $128$.

If a row in the table starts with $n/k$ then: column ``$G$'' gives the catalog number $n/k$ corresponding to the abelian group \texttt{SmallGroup(n,k)} of \texttt{GAP}; column ``structure'' gives a structural description of the group $G$ from which a decomposition of $G$ into $p$-primary components is readily seen and hence Proposition \ref{Pr:HK} can be routinely applied; column ``$|A|$'' gives the cardinality of the group $A=\mathrm{Aut}(G)$; column ``$|X|$'' gives the number of conjugacy classes of $A$; column ``$|O|$'' gives the number of orbits of the conjugation action of $A$ on $A\times A$ (with action $(f,g)^h = (f^h,g^h)$), which is a lower bound on the number of quasigroups affine over $G$; column ``$cq$'' gives the number of quasigroups affine over $G$ up to isomorphism; column ``$|O_c|$'' gives the number of orbits in $O$ with a representative $(f,g)$ such that $fg=gf$, which is a lower bound on the number of medial quasigroups over $G$; column ``$mq$'' gives the number of medial quasigroups over $G$ up to isomorphism; and column ``ref'' gives a reference to a numbered result within this paper if the entries in the row follow from the cited result and possibly also from previously listed table entries.

If a row in the table starts with $\mathbf{n}$ then: column ``$G$'' gives the order $n$; column ``$cq$'' gives the number of central quasigroups of order $n$ up to isomorphism; and column ``$mq$'' gives the number of medial quasigroups of order $n$ up to isomorphism.

Entries that we were not able to establish are denoted by ``?'' or ``\textbf{?}''.

All entries corresponding to prime-power orders were explicitly calculated by Algorithm \ref{Alg:Main} although the cyclic cases follow from Theorem \ref{Th:Cp}. Many of the entries corresponding to the remaining orders were also initially obtained by Algorithm \ref{Alg:Main} (to test the algorithm) but in the final version they were calculated directly from earlier entries using Proposition \ref{Pr:HK}.

To reduce the number of transcription and arithmetical errors, the entries and the \LaTeX\  source of the table were computer generated.

\begin{small}

\begin{displaymath}\
\begin{array}{|rr|rr|rr|rr|r|}
\hline
G & \text{structure} & |A| & |X| & |O| & cq & |O_c| & mq & \text{ref}\\
\hline\hline
1/1&C_{1}&1&1&1&1&1&1&\\
\textbf{1}&&&&&\textbf{1}&&\textbf{1}&\\
\hline
2/1&C_{2}&1&1&1&1&1&1&\ref{Th:Cp}\\
\textbf{2}&&&&&\textbf{1}&&\textbf{1}&\\
\hline
3/1&C_{3}&2&2&4&5&4&5&\ref{Th:Cp}\\
\textbf{3}&&&&&\textbf{5}&&\textbf{5}&\\
\hline
4/1&C_{4}&2&2&4&4&4&4&\ref{Th:Cp}\\
4/2&C_{2}^{2}&6&3&11&15&8&9&\\
\textbf{4}&&&&&\textbf{19}&&\textbf{13}&\\
\hline
5/1&C_{5}&4&4&16&19&16&19&\ref{Th:Cp}\\
\textbf{5}&&&&&\textbf{19}&&\textbf{19}&\\
\hline
6/2&C_{2}{\times} C_{3}&2&2&4&5&4&5&\ref{Pr:HK}\\
\textbf{6}&&&&&\textbf{5}&&\textbf{5}&\\
\hline
7/1&C_{7}&6&6&36&41&36&41&\ref{Th:Cp}\\
\textbf{7}&&&&&\textbf{41}&&\textbf{41}&\\
\hline
\end{array}\end{displaymath}
\newpage

\begin{displaymath}\
\begin{array}{|rr|rr|rr|rr|r|}
\hline
G & \text{structure} & |A| & |X| & |O| & cq & |O_c| & mq & \text{ref}\\
\hline\hline
8/1&C_{8}&4&4&16&16&16&16&\ref{Th:Cp}\\
8/2&C_{4}{\times} C_{2}&8&5&28&28&22&22&\\
8/5&C_{2}^{3}&168&6&197&341&32&35&\\
\textbf{8}&&&&&\textbf{385}&&\textbf{73}&\\
\hline
9/1&C_{9}&6&6&36&48&36&48&\ref{Th:Cp}\\
9/2&C_{3}^{2}&48&8&136&183&56&68&\\
\textbf{9}&&&&&\textbf{231}&&\textbf{116}&\\
\hline
10/2&C_{2}{\times} C_{5}&4&4&16&19&16&19&\ref{Pr:HK}\\
\textbf{10}&&&&&\textbf{19}&&\textbf{19}&\\
\hline
11/1&C_{11}&10&10&100&109&100&109&\ref{Th:Cp}\\
\textbf{11}&&&&&\textbf{109}&&\textbf{109}&\\
\hline
12/2&C_{4}{\times} C_{3}&4&4&16&20&16&20&\ref{Pr:HK}\\
12/5&C_{2}^{2}{\times} C_{3}&12&6&44&75&32&45&\ref{Pr:HK}\\
\textbf{12}&&&&&\textbf{95}&&\textbf{65}&\\
\hline
13/1&C_{13}&12&12&144&155&144&155&\ref{Th:Cp}\\
\textbf{13}&&&&&\textbf{155}&&\textbf{155}&\\
\hline
14/2&C_{2}{\times} C_{7}&6&6&36&41&36&41&\ref{Pr:HK}\\
\textbf{14}&&&&&\textbf{41}&&\textbf{41}&\\
\hline
15/1&C_{3}{\times} C_{5}&8&8&64&95&64&95&\ref{Pr:HK}\\
\textbf{15}&&&&&\textbf{95}&&\textbf{95}&\\
\hline
16/1&C_{16}&8&8&64&64&64&64&\ref{Th:Cp}\\
16/2&C_{4}^{2}&96&14&400&624&168&188&\\
16/5&C_{8}{\times} C_{2}&16&10&112&112&88&88&\\
16/10&C_{4}{\times} C_{2}^{2}&192&13&564&820&146&150&\\
16/14&C_{2}^{4}&20160&14&20747&39767&160&179&\\
\textbf{16}&&&&&\textbf{41387}&&\textbf{669}&\\
\hline
17/1&C_{17}&16&16&256&271&256&271&\ref{Th:Cp}\\
\textbf{17}&&&&&\textbf{271}&&\textbf{271}&\\
\hline
18/2&C_{2}{\times} C_{9}&6&6&36&48&36&48&\ref{Pr:HK}\\
18/5&C_{2}{\times} C_{3}^{2}&48&8&136&183&56&68&\ref{Pr:HK}\\
\textbf{18}&&&&&\textbf{231}&&\textbf{116}&\\
\hline
19/1&C_{19}&18&18&324&341&324&341&\ref{Th:Cp}\\
\textbf{19}&&&&&\textbf{341}&&\textbf{341}&\\
\hline
20/2&C_{4}{\times} C_{5}&8&8&64&76&64&76&\ref{Pr:HK}\\
20/5&C_{2}^{2}{\times} C_{5}&24&12&176&285&128&171&\ref{Pr:HK}\\
\textbf{20}&&&&&\textbf{361}&&\textbf{247}&\\
\hline
21/2&C_{3}{\times} C_{7}&12&12&144&205&144&205&\ref{Pr:HK}\\
\textbf{21}&&&&&\textbf{205}&&\textbf{205}&\\
\hline
22/2&C_{2}{\times} C_{11}&10&10&100&109&100&109&\ref{Pr:HK}\\
\textbf{22}&&&&&\textbf{109}&&\textbf{109}&\\
\hline
23/1&C_{23}&22&22&484&505&484&505&\ref{Th:Cp}\\
\textbf{23}&&&&&\textbf{505}&&\textbf{505}&\\
\hline
\end{array}
\end{displaymath}
\newpage

\begin{displaymath}\
\begin{array}{|rr|rr|rr|rr|r|}
\hline
G & \text{structure} & |A| & |X| & |O| & cq & |O_c| & mq & \text{ref}\\
\hline\hline
24/2&C_{8}{\times} C_{3}&8&8&64&80&64&80&\ref{Pr:HK}\\
24/9&C_{4}{\times} C_{2}{\times} C_{3}&16&10&112&140&88&110&\ref{Pr:HK}\\
24/15&C_{2}^{3}{\times} C_{3}&336&12&788&1705&128&175&\ref{Pr:HK}\\
\textbf{24}&&&&&\textbf{1925}&&\textbf{365}&\\
\hline
25/1&C_{25}&20&20&400&490&400&490&\ref{Th:Cp}\\
25/2&C_{5}^{2}&480&24&2336&2847&512&594&\\
\textbf{25}&&&&&\textbf{3337}&&\textbf{1084}&\\
\hline
26/2&C_{2}{\times} C_{13}&12&12&144&155&144&155&\ref{Pr:HK}\\
\textbf{26}&&&&&\textbf{155}&&\textbf{155}&\\
\hline
27/1&C_{27}&18&18&324&441&324&441&\ref{Th:Cp}\\
27/2&C_{9}{\times} C_{3}&108&20&864&1356&336&528&\\
27/5&C_{3}^{3}&11232&24&23236&34321&484&605&\\
\textbf{27}&&&&&\textbf{36118}&&\textbf{1574}&\\
\hline
28/2&C_{4}{\times} C_{7}&12&12&144&164&144&164&\ref{Pr:HK}\\
28/4&C_{2}^{2}{\times} C_{7}&36&18&396&615&288&369&\ref{Pr:HK}\\
\textbf{28}&&&&&\textbf{779}&&\textbf{533}&\\
\hline
29/1&C_{29}&28&28&784&811&784&811&\ref{Th:Cp}\\
\textbf{29}&&&&&\textbf{811}&&\textbf{811}&\\
\hline
30/4&C_{2}{\times} C_{3}{\times} C_{5}&8&8&64&95&64&95&\ref{Pr:HK}\\
\textbf{30}&&&&&\textbf{95}&&\textbf{95}&\\
\hline
31/1&C_{31}&30&30&900&929&900&929&\ref{Th:Cp}\\
\textbf{31}&&&&&\textbf{929}&&\textbf{929}&\\
\hline
32/1&C_{32}&16&16&256&256&256&256&\ref{Th:Cp}\\
32/3&C_{8}{\times} C_{4}&128&26&1216&1216&592&592&\\
32/16&C_{16}{\times} C_{2}&32&20&448&448&352&352&\\
32/21&C_{4}^{2}{\times} C_{2}&1536&30&6224&9808&884&904&\\
32/36&C_{8}{\times} C_{2}^{2}&384&26&2256&3280&584&600&\\
32/45&C_{4}{\times} C_{2}^{3}&21504&30&48412&87580&804&834&\\
32/51&C_{2}^{5}&9999360&27&10024077&19721077&590&655&\\
\textbf{32}&&&&&\textbf{19823665}&&\textbf{4193}&\\
\hline
33/1&C_{3}{\times} C_{11}&20&20&400&545&400&545&\ref{Pr:HK}\\
\textbf{33}&&&&&\textbf{545}&&\textbf{545}&\\
\hline
34/2&C_{2}{\times} C_{17}&16&16&256&271&256&271&\ref{Pr:HK}\\
\textbf{34}&&&&&\textbf{271}&&\textbf{271}&\\
\hline
35/1&C_{5}{\times} C_{7}&24&24&576&779&576&779&\ref{Pr:HK}\\
\textbf{35}&&&&&\textbf{779}&&\textbf{779}&\\
\hline
36/2&C_{4}{\times} C_{9}&12&12&144&192&144&192&\ref{Pr:HK}\\
36/5&C_{2}^{2}{\times} C_{9}&36&18&396&720&288&432&\ref{Pr:HK}\\
36/8&C_{4}{\times} C_{3}^{2}&96&16&544&732&224&272&\ref{Pr:HK}\\
36/14&C_{2}^{2}{\times} C_{3}^{2}&288&24&1496&2745&448&612&\ref{Pr:HK}\\
\textbf{36}&&&&&\textbf{4389}&&\textbf{1508}&\\
\hline
37/1&C_{37}&36&36&1296&1331&1296&1331&\ref{Th:Cp}\\
\textbf{37}&&&&&\textbf{1331}&&\textbf{1331}&\\
\hline
\end{array}
\end{displaymath}
\newpage

\begin{displaymath}\
\begin{array}{|rr|rr|rr|rr|r|}
\hline
G & \text{structure} & |A| & |X| & |O| & cq & |O_c| & mq & \text{ref}\\
\hline\hline
38/2&C_{2}{\times} C_{19}&18&18&324&341&324&341&\ref{Pr:HK}\\
\textbf{38}&&&&&\textbf{341}&&\textbf{341}&\\
\hline
39/2&C_{3}{\times} C_{13}&24&24&576&775&576&775&\ref{Pr:HK}\\
\textbf{39}&&&&&\textbf{775}&&\textbf{775}&\\
\hline
40/2&C_{8}{\times} C_{5}&16&16&256&304&256&304&\ref{Pr:HK}\\
40/9&C_{4}{\times} C_{2}{\times} C_{5}&32&20&448&532&352&418&\ref{Pr:HK}\\
40/14&C_{2}^{3}{\times} C_{5}&672&24&3152&6479&512&665&\ref{Pr:HK}\\
\textbf{40}&&&&&\textbf{7315}&&\textbf{1387}&\\
\hline
41/1&C_{41}&40&40&1600&1639&1600&1639&\ref{Th:Cp}\\
\textbf{41}&&&&&\textbf{1639}&&\textbf{1639}&\\
\hline
42/6&C_{2}{\times} C_{3}{\times} C_{7}&12&12&144&205&144&205&\ref{Pr:HK}\\
\textbf{42}&&&&&\textbf{205}&&\textbf{205}&\\
\hline
43/1&C_{43}&42&42&1764&1805&1764&1805&\ref{Th:Cp}\\
\textbf{43}&&&&&\textbf{1805}&&\textbf{1805}&\\
\hline
44/2&C_{4}{\times} C_{11}&20&20&400&436&400&436&\ref{Pr:HK}\\
44/4&C_{2}^{2}{\times} C_{11}&60&30&1100&1635&800&981&\ref{Pr:HK}\\
\textbf{44}&&&&&\textbf{2071}&&\textbf{1417}&\\
\hline
45/1&C_{9}{\times} C_{5}&24&24&576&912&576&912&\ref{Pr:HK}\\
45/2&C_{3}^{2}{\times} C_{5}&192&32&2176&3477&896&1292&\ref{Pr:HK}\\
\textbf{45}&&&&&\textbf{4389}&&\textbf{2204}&\\
\hline
46/2&C_{2}{\times} C_{23}&22&22&484&505&484&505&\ref{Pr:HK}\\
\textbf{46}&&&&&\textbf{505}&&\textbf{505}&\\
\hline
47/1&C_{47}&46&46&2116&2161&2116&2161&\ref{Th:Cp}\\
\textbf{47}&&&&&\textbf{2161}&&\textbf{2161}&\\
\hline
48/2&C_{16}{\times} C_{3}&16&16&256&320&256&320&\ref{Pr:HK}\\
48/20&C_{4}^{2}{\times} C_{3}&192&28&1600&3120&672&940&\ref{Pr:HK}\\
48/23&C_{8}{\times} C_{2}{\times} C_{3}&32&20&448&560&352&440&\ref{Pr:HK}\\
48/44&C_{4}{\times} C_{2}^{2}{\times} C_{3}&384&26&2256&4100&584&750&\ref{Pr:HK}\\
48/52&C_{2}^{4}{\times} C_{3}&40320&28&82988&198835&640&895&\ref{Pr:HK}\\
\textbf{48}&&&&&\textbf{206935}&&\textbf{3345}&\\
\hline
49/1&C_{49}&42&42&1764&2044&1764&2044&\ref{Th:Cp}\\
49/2&C_{7}^{2}&2016&48&13896&16055&2088&2344&\\
\textbf{49}&&&&&\textbf{18099}&&\textbf{4388}&\\
\hline
50/2&C_{2}{\times} C_{25}&20&20&400&490&400&490&\ref{Pr:HK}\\
50/5&C_{2}{\times} C_{5}^{2}&480&24&2336&2847&512&594&\ref{Pr:HK}\\
\textbf{50}&&&&&\textbf{3337}&&\textbf{1084}&\\
\hline
51/1&C_{3}{\times} C_{17}&32&32&1024&1355&1024&1355&\ref{Pr:HK}\\
\textbf{51}&&&&&\textbf{1355}&&\textbf{1355}&\\
\hline
52/2&C_{4}{\times} C_{13}&24&24&576&620&576&620&\ref{Pr:HK}\\
52/5&C_{2}^{2}{\times} C_{13}&72&36&1584&2325&1152&1395&\ref{Pr:HK}\\
\textbf{52}&&&&&\textbf{2945}&&\textbf{2015}&\\
\hline
53/1&C_{53}&52&52&2704&2755&2704&2755&\ref{Th:Cp}\\
\textbf{53}&&&&&\textbf{2755}&&\textbf{2755}&\\
\hline
\end{array}
\end{displaymath}
\newpage

\begin{displaymath}\
\begin{array}{|rr|rr|rr|rr|r|}
\hline
G & \text{structure} & |A| & |X| & |O| & cq & |O_c| & mq & \text{ref}\\
\hline\hline
54/2&C_{2}{\times} C_{27}&18&18&324&441&324&441&\ref{Pr:HK}\\
54/9&C_{2}{\times} C_{9}{\times} C_{3}&108&20&864&1356&336&528&\ref{Pr:HK}\\
54/15&C_{2}{\times} C_{3}^{3}&11232&24&23236&34321&484&605&\ref{Pr:HK}\\
\textbf{54}&&&&&\textbf{36118}&&\textbf{1574}&\\
\hline
55/2&C_{5}{\times} C_{11}&40&40&1600&2071&1600&2071&\ref{Pr:HK}\\
\textbf{55}&&&&&\textbf{2071}&&\textbf{2071}&\\
\hline
56/2&C_{8}{\times} C_{7}&24&24&576&656&576&656&\ref{Pr:HK}\\
56/8&C_{4}{\times} C_{2}{\times} C_{7}&48&30&1008&1148&792&902&\ref{Pr:HK}\\
56/13&C_{2}^{3}{\times} C_{7}&1008&36&7092&13981&1152&1435&\ref{Pr:HK}\\
\textbf{56}&&&&&\textbf{15785}&&\textbf{2993}&\\
\hline
57/2&C_{3}{\times} C_{19}&36&36&1296&1705&1296&1705&\ref{Pr:HK}\\
\textbf{57}&&&&&\textbf{1705}&&\textbf{1705}&\\
\hline
58/2&C_{2}{\times} C_{29}&28&28&784&811&784&811&\ref{Pr:HK}\\
\textbf{58}&&&&&\textbf{811}&&\textbf{811}&\\
\hline
59/1&C_{59}&58&58&3364&3421&3364&3421&\ref{Th:Cp}\\
\textbf{59}&&&&&\textbf{3421}&&\textbf{3421}&\\
\hline
60/4&C_{4}{\times} C_{3}{\times} C_{5}&16&16&256&380&256&380&\ref{Pr:HK}\\
60/13&C_{2}^{2}{\times} C_{3}{\times} C_{5}&48&24&704&1425&512&855&\ref{Pr:HK}\\
\textbf{60}&&&&&\textbf{1805}&&\textbf{1235}&\\
\hline
61/1&C_{61}&60&60&3600&3659&3600&3659&\ref{Th:Cp}\\
\textbf{61}&&&&&\textbf{3659}&&\textbf{3659}&\\
\hline
62/2&C_{2}{\times} C_{31}&30&30&900&929&900&929&\ref{Pr:HK}\\
\textbf{62}&&&&&\textbf{929}&&\textbf{929}&\\
\hline
63/2&C_{9}{\times} C_{7}&36&36&1296&1968&1296&1968&\ref{Pr:HK}\\
63/4&C_{3}^{2}{\times} C_{7}&288&48&4896&7503&2016&2788&\ref{Pr:HK}\\
\textbf{63}&&&&&\textbf{9471}&&\textbf{4756}&\\
\hline
64/1&C_{64}&32&32&1024&1024&1024&1024&\ref{Th:Cp}\\
64/2&C_{8}^{2}&1536&60&13568&22784&3072&3408&\\
64/26&C_{16}{\times} C_{4}&256&52&4864&4864&2368&2368&\\
64/50&C_{32}{\times} C_{2}&64&40&1792&1792&1408&1408&\\
64/55&C_{4}^{3}&86016&60&206144&441664&4448&4672&\\
64/83&C_{8}{\times} C_{4}{\times} C_{2}&2048&104&31168&31168&7240&7240&\\
64/183&C_{16}{\times} C_{2}^{2}&768&52&9024&13120&2336&2400&\\
64/192&C_{4}^{2}{\times} C_{2}^{2}&147456&100&550480&1239472&9108&9656&\\
64/246&C_{8}{\times} C_{2}^{3}&43008&60&193648&350320&3216&3336&\\
64/260&C_{4}{\times} C_{2}^{4}&10321920&69&?&?&?&?&\\
64/267&C_{2}^{6}&20158709760&60&?&?&?&?&\\
\textbf{64}&&&&&\textbf{?}&&\textbf{?}&\\
\hline
65/1&C_{5}{\times} C_{13}&48&48&2304&2945&2304&2945&\ref{Pr:HK}\\
\textbf{65}&&&&&\textbf{2945}&&\textbf{2945}&\\
\hline
66/4&C_{2}{\times} C_{3}{\times} C_{11}&20&20&400&545&400&545&\ref{Pr:HK}\\
\textbf{66}&&&&&\textbf{545}&&\textbf{545}&\\
\hline
67/1&C_{67}&66&66&4356&4421&4356&4421&\ref{Th:Cp}\\
\textbf{67}&&&&&\textbf{4421}&&\textbf{4421}&\\
\hline
\end{array}
\end{displaymath}
\newpage

\begin{displaymath}\
\begin{array}{|rr|rr|rr|rr|r|}
\hline
G & \text{structure} & |A| & |X| & |O| & cq & |O_c| & mq & \text{ref}\\
\hline\hline
68/2&C_{4}{\times} C_{17}&32&32&1024&1084&1024&1084&\ref{Pr:HK}\\
68/5&C_{2}^{2}{\times} C_{17}&96&48&2816&4065&2048&2439&\ref{Pr:HK}\\
\textbf{68}&&&&&\textbf{5149}&&\textbf{3523}&\\
\hline
69/1&C_{3}{\times} C_{23}&44&44&1936&2525&1936&2525&\ref{Pr:HK}\\
\textbf{69}&&&&&\textbf{2525}&&\textbf{2525}&\\
\hline
70/4&C_{2}{\times} C_{5}{\times} C_{7}&24&24&576&779&576&779&\ref{Pr:HK}\\
\textbf{70}&&&&&\textbf{779}&&\textbf{779}&\\
\hline
71/1&C_{71}&70&70&4900&4969&4900&4969&\ref{Th:Cp}\\
\textbf{71}&&&&&\textbf{4969}&&\textbf{4969}&\\
\hline
72/2&C_{8}{\times} C_{9}&24&24&576&768&576&768&\ref{Pr:HK}\\
72/9&C_{4}{\times} C_{2}{\times} C_{9}&48&30&1008&1344&792&1056&\ref{Pr:HK}\\
72/14&C_{8}{\times} C_{3}^{2}&192&32&2176&2928&896&1088&\ref{Pr:HK}\\
72/18&C_{2}^{3}{\times} C_{9}&1008&36&7092&16368&1152&1680&\ref{Pr:HK}\\
72/36&C_{4}{\times} C_{2}{\times} C_{3}^{2}&384&40&3808&5124&1232&1496&\ref{Pr:HK}\\
72/50&C_{2}^{3}{\times} C_{3}^{2}&8064&48&26792&62403&1792&2380&\ref{Pr:HK}\\
\textbf{72}&&&&&\textbf{88935}&&\textbf{8468}&\\
\hline
73/1&C_{73}&72&72&5184&5255&5184&5255&\ref{Th:Cp}\\
\textbf{73}&&&&&\textbf{5255}&&\textbf{5255}&\\
\hline
74/2&C_{2}{\times} C_{37}&36&36&1296&1331&1296&1331&\ref{Pr:HK}\\
\textbf{74}&&&&&\textbf{1331}&&\textbf{1331}&\\
\hline
75/1&C_{3}{\times} C_{25}&40&40&1600&2450&1600&2450&\ref{Pr:HK}\\
75/3&C_{3}{\times} C_{5}^{2}&960&48&9344&14235&2048&2970&\ref{Pr:HK}\\
\textbf{75}&&&&&\textbf{16685}&&\textbf{5420}&\\
\hline
76/2&C_{4}{\times} C_{19}&36&36&1296&1364&1296&1364&\ref{Pr:HK}\\
76/4&C_{2}^{2}{\times} C_{19}&108&54&3564&5115&2592&3069&\ref{Pr:HK}\\
\textbf{76}&&&&&\textbf{6479}&&\textbf{4433}&\\
\hline
77/1&C_{7}{\times} C_{11}&60&60&3600&4469&3600&4469&\ref{Pr:HK}\\
\textbf{77}&&&&&\textbf{4469}&&\textbf{4469}&\\
\hline
78/6&C_{2}{\times} C_{3}{\times} C_{13}&24&24&576&775&576&775&\ref{Pr:HK}\\
\textbf{78}&&&&&\textbf{775}&&\textbf{775}&\\
\hline
79/1&C_{79}&78&78&6084&6161&6084&6161&\ref{Th:Cp}\\
\textbf{79}&&&&&\textbf{6161}&&\textbf{6161}&\\
\hline
80/2&C_{16}{\times} C_{5}&32&32&1024&1216&1024&1216&\ref{Pr:HK}\\
80/20&C_{4}^{2}{\times} C_{5}&384&56&6400&11856&2688&3572&\ref{Pr:HK}\\
80/23&C_{8}{\times} C_{2}{\times} C_{5}&64&40&1792&2128&1408&1672&\ref{Pr:HK}\\
80/45&C_{4}{\times} C_{2}^{2}{\times} C_{5}&768&52&9024&15580&2336&2850&\ref{Pr:HK}\\
80/52&C_{2}^{4}{\times} C_{5}&80640&56&331952&755573&2560&3401&\ref{Pr:HK}\\
\textbf{80}&&&&&\textbf{786353}&&\textbf{12711}&\\
\hline
81/1&C_{81}&54&54&2916&3996&2916&3996&\ref{Th:Cp}\\
81/2&C_{9}^{2}&3888&78&35316&54405&5616&8055&\\
81/5&C_{27}{\times} C_{3}&324&60&7776&12897&3024&5157&\\
81/11&C_{9}{\times} C_{3}^{2}&23328&74&152892&270441&4176&7167&\\
81/15&C_{3}^{4}&24261120&78&?&?&?&?&\\
\textbf{81}&&&&&\textbf{?}&&\textbf{?}&\\
\hline
\end{array}
\end{displaymath}
\newpage

\begin{displaymath}\
\begin{array}{|rr|rr|rr|rr|r|}
\hline
G & \text{structure} & |A| & |X| & |O| & cq & |O_c| & mq & \text{ref}\\
\hline\hline
82/2&C_{2}{\times} C_{41}&40&40&1600&1639&1600&1639&\ref{Pr:HK}\\
\textbf{82}&&&&&\textbf{1639}&&\textbf{1639}&\\
\hline
83/1&C_{83}&82&82&6724&6805&6724&6805&\ref{Th:Cp}\\
\textbf{83}&&&&&\textbf{6805}&&\textbf{6805}&\\
\hline
84/6&C_{4}{\times} C_{3}{\times} C_{7}&24&24&576&820&576&820&\ref{Pr:HK}\\
84/15&C_{2}^{2}{\times} C_{3}{\times} C_{7}&72&36&1584&3075&1152&1845&\ref{Pr:HK}\\
\textbf{84}&&&&&\textbf{3895}&&\textbf{2665}&\\
\hline
85/1&C_{5}{\times} C_{17}&64&64&4096&5149&4096&5149&\ref{Pr:HK}\\
\textbf{85}&&&&&\textbf{5149}&&\textbf{5149}&\\
\hline
86/2&C_{2}{\times} C_{43}&42&42&1764&1805&1764&1805&\ref{Pr:HK}\\
\textbf{86}&&&&&\textbf{1805}&&\textbf{1805}&\\
\hline
87/1&C_{3}{\times} C_{29}&56&56&3136&4055&3136&4055&\ref{Pr:HK}\\
\textbf{87}&&&&&\textbf{4055}&&\textbf{4055}&\\
\hline
88/2&C_{8}{\times} C_{11}&40&40&1600&1744&1600&1744&\ref{Pr:HK}\\
88/8&C_{4}{\times} C_{2}{\times} C_{11}&80&50&2800&3052&2200&2398&\ref{Pr:HK}\\
88/12&C_{2}^{3}{\times} C_{11}&1680&60&19700&37169&3200&3815&\ref{Pr:HK}\\
\textbf{88}&&&&&\textbf{41965}&&\textbf{7957}&\\
\hline
89/1&C_{89}&88&88&7744&7831&7744&7831&\ref{Th:Cp}\\
\textbf{89}&&&&&\textbf{7831}&&\textbf{7831}&\\
\hline
90/4&C_{2}{\times} C_{9}{\times} C_{5}&24&24&576&912&576&912&\ref{Pr:HK}\\
90/10&C_{2}{\times} C_{3}^{2}{\times} C_{5}&192&32&2176&3477&896&1292&\ref{Pr:HK}\\
\textbf{90}&&&&&\textbf{4389}&&\textbf{2204}&\\
\hline
91/1&C_{7}{\times} C_{13}&72&72&5184&6355&5184&6355&\ref{Pr:HK}\\
\textbf{91}&&&&&\textbf{6355}&&\textbf{6355}&\\
\hline
92/2&C_{4}{\times} C_{23}&44&44&1936&2020&1936&2020&\ref{Pr:HK}\\
92/4&C_{2}^{2}{\times} C_{23}&132&66&5324&7575&3872&4545&\ref{Pr:HK}\\
\textbf{92}&&&&&\textbf{9595}&&\textbf{6565}&\\
\hline
93/2&C_{3}{\times} C_{31}&60&60&3600&4645&3600&4645&\ref{Pr:HK}\\
\textbf{93}&&&&&\textbf{4645}&&\textbf{4645}&\\
\hline
94/2&C_{2}{\times} C_{47}&46&46&2116&2161&2116&2161&\ref{Pr:HK}\\
\textbf{94}&&&&&\textbf{2161}&&\textbf{2161}&\\
\hline
95/1&C_{5}{\times} C_{19}&72&72&5184&6479&5184&6479&\ref{Pr:HK}\\
\textbf{95}&&&&&\textbf{6479}&&\textbf{6479}&\\
\hline
96/2&C_{32}{\times} C_{3}&32&32&1024&1280&1024&1280&\ref{Pr:HK}\\
96/46&C_{8}{\times} C_{4}{\times} C_{3}&256&52&4864&6080&2368&2960&\ref{Pr:HK}\\
96/59&C_{16}{\times} C_{2}{\times} C_{3}&64&40&1792&2240&1408&1760&\ref{Pr:HK}\\
96/161&C_{4}^{2}{\times} C_{2}{\times} C_{3}&3072&60&24896&49040&3536&4520&\ref{Pr:HK}\\
96/176&C_{8}{\times} C_{2}^{2}{\times} C_{3}&768&52&9024&16400&2336&3000&\ref{Pr:HK}\\
96/220&C_{4}{\times} C_{2}^{3}{\times} C_{3}&43008&60&193648&437900&3216&4170&\ref{Pr:HK}\\
96/231&C_{2}^{5}{\times} C_{3}&19998720&54&40096308&98605385&2360&3275&\ref{Pr:HK}\\
\textbf{96}&&&&&\textbf{99118325}&&\textbf{20965}&\\
\hline
97/1&C_{97}&96&96&9216&9311&9216&9311&\ref{Th:Cp}\\
\textbf{97}&&&&&\textbf{9311}&&\textbf{9311}&\\
\hline
\end{array}
\end{displaymath}
\newpage

\begin{displaymath}\
\begin{array}{|rr|rr|rr|rr|r|}
\hline
G & \text{structure} & |A| & |X| & |O| & cq & |O_c| & mq & \text{ref}\\
\hline\hline
98/2&C_{2}{\times} C_{49}&42&42&1764&2044&1764&2044&\ref{Pr:HK}\\
98/5&C_{2}{\times} C_{7}^{2}&2016&48&13896&16055&2088&2344&\ref{Pr:HK}\\
\textbf{98}&&&&&\textbf{18099}&&\textbf{4388}&\\
\hline
99/1&C_{9}{\times} C_{11}&60&60&3600&5232&3600&5232&\ref{Pr:HK}\\
99/2&C_{3}^{2}{\times} C_{11}&480&80&13600&19947&5600&7412&\ref{Pr:HK}\\
\textbf{99}&&&&&\textbf{25179}&&\textbf{12644}&\\
\hline
100/2&C_{4}{\times} C_{25}&40&40&1600&1960&1600&1960&\ref{Pr:HK}\\
100/5&C_{2}^{2}{\times} C_{25}&120&60&4400&7350&3200&4410&\ref{Pr:HK}\\
100/8&C_{4}{\times} C_{5}^{2}&960&48&9344&11388&2048&2376&\ref{Pr:HK}\\
100/16&C_{2}^{2}{\times} C_{5}^{2}&2880&72&25696&42705&4096&5346&\ref{Pr:HK}\\
\textbf{100}&&&&&\textbf{63403}&&\textbf{14092}&\\
\hline
101/1&C_{101}&100&100&10000&10099&10000&10099&\ref{Th:Cp}\\
\textbf{101}&&&&&\textbf{10099}&&\textbf{10099}&\\
\hline
102/4&C_{2}{\times} C_{3}{\times} C_{17}&32&32&1024&1355&1024&1355&\ref{Pr:HK}\\
\textbf{102}&&&&&\textbf{1355}&&\textbf{1355}&\\
\hline
103/1&C_{103}&102&102&10404&10505&10404&10505&\ref{Th:Cp}\\
\textbf{103}&&&&&\textbf{10505}&&\textbf{10505}&\\
\hline
104/2&C_{8}{\times} C_{13}&48&48&2304&2480&2304&2480&\ref{Pr:HK}\\
104/9&C_{4}{\times} C_{2}{\times} C_{13}&96&60&4032&4340&3168&3410&\ref{Pr:HK}\\
104/14&C_{2}^{3}{\times} C_{13}&2016&72&28368&52855&4608&5425&\ref{Pr:HK}\\
\textbf{104}&&&&&\textbf{59675}&&\textbf{11315}&\\
\hline
105/2&C_{3}{\times} C_{5}{\times} C_{7}&48&48&2304&3895&2304&3895&\ref{Pr:HK}\\
\textbf{105}&&&&&\textbf{3895}&&\textbf{3895}&\\
\hline
106/2&C_{2}{\times} C_{53}&52&52&2704&2755&2704&2755&\ref{Pr:HK}\\
\textbf{106}&&&&&\textbf{2755}&&\textbf{2755}&\\
\hline
107/1&C_{107}&106&106&11236&11341&11236&11341&\ref{Th:Cp}\\
\textbf{107}&&&&&\textbf{11341}&&\textbf{11341}&\\
\hline
108/2&C_{4}{\times} C_{27}&36&36&1296&1764&1296&1764&\ref{Pr:HK}\\
108/5&C_{2}^{2}{\times} C_{27}&108&54&3564&6615&2592&3969&\ref{Pr:HK}\\
108/12&C_{4}{\times} C_{9}{\times} C_{3}&216&40&3456&5424&1344&2112&\ref{Pr:HK}\\
108/29&C_{2}^{2}{\times} C_{9}{\times} C_{3}&648&60&9504&20340&2688&4752&\ref{Pr:HK}\\
108/35&C_{4}{\times} C_{3}^{3}&22464&48&92944&137284&1936&2420&\ref{Pr:HK}\\
108/45&C_{2}^{2}{\times} C_{3}^{3}&67392&72&255596&514815&3872&5445&\ref{Pr:HK}\\
\textbf{108}&&&&&\textbf{686242}&&\textbf{20462}&\\
\hline
109/1&C_{109}&108&108&11664&11771&11664&11771&\ref{Th:Cp}\\
\textbf{109}&&&&&\textbf{11771}&&\textbf{11771}&\\
\hline
110/6&C_{2}{\times} C_{5}{\times} C_{11}&40&40&1600&2071&1600&2071&\ref{Pr:HK}\\
\textbf{110}&&&&&\textbf{2071}&&\textbf{2071}&\\
\hline
111/2&C_{3}{\times} C_{37}&72&72&5184&6655&5184&6655&\ref{Pr:HK}\\
\textbf{111}&&&&&\textbf{6655}&&\textbf{6655}&\\
\hline
\end{array}
\end{displaymath}
\newpage

\begin{displaymath}\
\begin{array}{|rr|rr|rr|rr|r|}
\hline
G & \text{structure} & |A| & |X| & |O| & cq & |O_c| & mq & \text{ref}\\
\hline\hline
112/2&C_{16}{\times} C_{7}&48&48&2304&2624&2304&2624&\ref{Pr:HK}\\
112/19&C_{4}^{2}{\times} C_{7}&576&84&14400&25584&6048&7708&\ref{Pr:HK}\\
112/22&C_{8}{\times} C_{2}{\times} C_{7}&96&60&4032&4592&3168&3608&\ref{Pr:HK}\\
112/37&C_{4}{\times} C_{2}^{2}{\times} C_{7}&1152&78&20304&33620&5256&6150&\ref{Pr:HK}\\
112/43&C_{2}^{4}{\times} C_{7}&120960&84&746892&1630447&5760&7339&\ref{Pr:HK}\\
\textbf{112}&&&&&\textbf{1696867}&&\textbf{27429}&\\
\hline
113/1&C_{113}&112&112&12544&12655&12544&12655&\ref{Th:Cp}\\
\textbf{113}&&&&&\textbf{12655}&&\textbf{12655}&\\
\hline
114/6&C_{2}{\times} C_{3}{\times} C_{19}&36&36&1296&1705&1296&1705&\ref{Pr:HK}\\
\textbf{114}&&&&&\textbf{1705}&&\textbf{1705}&\\
\hline
115/1&C_{5}{\times} C_{23}&88&88&7744&9595&7744&9595&\ref{Pr:HK}\\
\textbf{115}&&&&&\textbf{9595}&&\textbf{9595}&\\
\hline
116/2&C_{4}{\times} C_{29}&56&56&3136&3244&3136&3244&\ref{Pr:HK}\\
116/5&C_{2}^{2}{\times} C_{29}&168&84&8624&12165&6272&7299&\ref{Pr:HK}\\
\textbf{116}&&&&&\textbf{15409}&&\textbf{10543}&\\
\hline
117/2&C_{9}{\times} C_{13}&72&72&5184&7440&5184&7440&\ref{Pr:HK}\\
117/4&C_{3}^{2}{\times} C_{13}&576&96&19584&28365&8064&10540&\ref{Pr:HK}\\
\textbf{117}&&&&&\textbf{35805}&&\textbf{17980}&\\
\hline
118/2&C_{2}{\times} C_{59}&58&58&3364&3421&3364&3421&\ref{Pr:HK}\\
\textbf{118}&&&&&\textbf{3421}&&\textbf{3421}&\\
\hline
119/1&C_{7}{\times} C_{17}&96&96&9216&11111&9216&11111&\ref{Pr:HK}\\
\textbf{119}&&&&&\textbf{11111}&&\textbf{11111}&\\
\hline
120/4&C_{8}{\times} C_{3}{\times} C_{5}&32&32&1024&1520&1024&1520&\ref{Pr:HK}\\
120/31&C_{4}{\times} C_{2}{\times} C_{3}{\times} C_{5}&64&40&1792&2660&1408&2090&\ref{Pr:HK}\\
120/47&C_{2}^{3}{\times} C_{3}{\times} C_{5}&1344&48&12608&32395&2048&3325&\ref{Pr:HK}\\
\textbf{120}&&&&&\textbf{36575}&&\textbf{6935}&\\
\hline
121/1&C_{121}&110&110&12100&13288&12100&13288&\ref{Th:Cp}\\
121/2&C_{11}^{2}&13200&120&144200&158199&13400&14508&\\
\textbf{121}&&&&&\textbf{171487}&&\textbf{27796}&\\
\hline
122/2&C_{2}{\times} C_{61}&60&60&3600&3659&3600&3659&\ref{Pr:HK}\\
\textbf{122}&&&&&\textbf{3659}&&\textbf{3659}&\\
\hline
123/1&C_{3}{\times} C_{41}&80&80&6400&8195&6400&8195&\ref{Pr:HK}\\
\textbf{123}&&&&&\textbf{8195}&&\textbf{8195}&\\
\hline
124/2&C_{4}{\times} C_{31}&60&60&3600&3716&3600&3716&\ref{Pr:HK}\\
124/4&C_{2}^{2}{\times} C_{31}&180&90&9900&13935&7200&8361&\ref{Pr:HK}\\
\textbf{124}&&&&&\textbf{17651}&&\textbf{12077}&\\
\hline
125/1&C_{125}&100&100&10000&12325&10000&12325&\ref{Th:Cp}\\
125/2&C_{25}{\times} C_{5}&2000&104&47200&66580&9280&13270&\\
125/5&C_{5}^{3}&1488000&120&?&?&?&?&\\
\textbf{125}&&&&&\textbf{?}&&\textbf{?}&\\
\hline
126/6&C_{2}{\times} C_{9}{\times} C_{7}&36&36&1296&1968&1296&1968&\ref{Pr:HK}\\
126/16&C_{2}{\times} C_{3}^{2}{\times} C_{7}&288&48&4896&7503&2016&2788&\ref{Pr:HK}\\
\textbf{126}&&&&&\textbf{9471}&&\textbf{4756}&\\
\hline
127/1&C_{127}&126&126&15876&16001&15876&16001&\ref{Th:Cp}\\
\textbf{127}&&&&&\textbf{16001}&&\textbf{16001}&\\
\hline
\end{array}\end{displaymath}

\end{small}

\end{document}